\newtheorem{theorem}{Theorem}
\newtheorem{corollary}{Corollary}
\newtheorem{lemma}{Lemma}
\newtheorem{definition}{Definition}
\def\eop{\hfill$\Box$}
\def\e{{\bf 1}}
\begin{document}

\title{Meertens number and its variations}

\author{Chai Wah Wu\\ IBM T. J. Watson Research Center\\ P. O. Box 218, Yorktown Heights, New York 10598, USA\\e-mail: chaiwahwu@member.ams.org}
\date{March 28, 2016}
\maketitle

\begin{abstract}
In 1998, Bird introduced Meertens numbers as numbers that are invariant under a map similar to the G\"{o}del encoding.  In base 10, the only known Meertens number is $81312000$.  We look at some properties of Meertens numbers and consider variations of this concept.  In particular, we consider variations where there is a finite time algorithm to decide whether such numbers exist.
\end{abstract}

\section{Introduction}
Kurt G\"{o}del in his celebrated work on mathematical logic \cite{godel:incompleteness:1931} uses an injective map from the set of finite sequences of symbols to the set of natural numbers in order to describe statements in logic as natural numbers and relating properties of mathematical proofs with properties of natural numbers. This approach is subsequently used by Alan Turing to define the notion of computable numbers \cite{turing:computable:1937}, which are numbers that can be computed by his abstract computing model.  This seminal work ushered in the field of theoretical computer science.  The basic G\"{o}del encoding is as follows: each symbol in an alphabet is mapped to a distinct positive integer.  Thus a finite sequence of symbols $s_1,\cdots ,s_n$ is mapped to a sequence of positive numbers $m_1,\cdots , m_n$.  This sequence is then mapped to a natural number $G = \prod_{i=1}^{n} p_i^{m_i}$, where $p_i$ is the $i$-th prime.   

In Ref. \cite{bird:meertens:1998}, Richard Bird dedicated the number $81312000$ to his friend Lambert Meertens on the occasion of his 25 years at the CWI institute and called it a {\em Meertens number}.  He constructed this number using a mapping similar to the G\"{o}del encoding.  

\begin{definition}
Given a decimal representation $d_1,\cdots ,d_n$ of the number $m = \sum_{i=1}^n d_i 10^{n-i}$, if $m = \prod_{i=1}^{n} p_i^{d_i}$ then $m$ is called a {\em Meertens number}.
\end{definition}

The only Meertens number known to date is $81312000 = 2^8 3^1 5^3 7^1 11^2 13^0 17^0 19^0$ \cite{bird:meertens:1998}.  David Applegate has conducted the search up to $10^{29}$
(see \url{https://oeis.org/A246532}) without finding any other Meertens number.

\begin{definition}
Let $b\geq 2$ and $0\leq d_i < b$ with $d_1 > 0$ be integers such that $m = \sum_{i=1}^n d_i b^{n-i}$, then $M_b(m)$ is defined as $M_b(m) =  \prod_{i=1}^{n} p_i^{d_i}$.
\end{definition}

Thus a Meertens number is a fixed point of the function $M_{10}(\cdot)$.
Note that the function $M_{10}$ is similar to the G\"{o}del encoding function.  However, unlike the G\"{o}del encoding, this function is not injective.  In particular, $M_{10}(10^k) = 2$ for all $k\geq 0$.
Since $d_i \leq 9$,  the exponent of the prime 2 and 5 must be less than or equal to 9 and thus a Meertens number has at most 9 trailing zeros.  In particular, the number of trailing zeros is the minimum of the first and third digit of $m$.

\section{Meertens number in other bases}
As noted in \cite{bird:meertens:1998}, the concept of a Meertens number can be defined in other number bases as well, i.e. $m$ is a Meertens number in base $b$ if  $m$ satisfies $m = \prod_{i=1}^{n} p_i^{d_i} = \sum_{i=1}^n d_i b^{n-i}$ for some nonnegative integers $d_i < b$ with $d_1 > 0$, i.e., $M_b(m) = m$.  Since $d_1\neq 0$, it is clear that a Meertens number must necessarily be even.  Similarly, the number of trailing zeros in base 10 is the minimum of the first and third digit of $m$ in base $b$.  
Table \ref{tbl:mn} lists some Meertens numbers found in various number bases.  

\begin{table}[htbp]
\begin{center}
\begin{tabular}{c|c}
\hline\hline
Number base & Meertens number \\
\hline
2 & 2, 6, 10 \\
3 & 10 \\
4 & 200 \\
5 & 6, 49000, 181500 \\
6 & 54 \\
7 & 100 \\
8 & 216 \\
9 & 4199040 \\
10 & 81312000 \\
14 & 47250 \\
16 & 18 \\
17 & 36 \\
19 & 96 \\
32 & 256 \\
51 & 54 \\
64 & 65536 \\
71 & 216 \\
158 & 162 \\
160 & 324 \\
323 & 1296 \\
481 & 486 \\
512 & 4294967296\\
1452 & 1458 \\
1455 & 2916 \\
1942 & 5832 \\
4096 & 65536 \\
4367 & 4374 \\
7775 & 46656 \\
8294 & 82944 \\
13114 & 13122 \\
13118 & 26244 \\
26242 & 104976 \\
39357 & 39366 \\
52485 & 157464 \\
74649 & 746496 \\
118088 & 118098 \\
209951 & 1679616 \\
354283 & 354294 \\
1062870 & 1062882 \\
1119743 & 10077696 
\end{tabular}
\end{center}
\caption{Meertens numbers in various number bases.}\label{tbl:mn}
\end{table}

The number $82944$ is interesting as it is a Meertens number in base $8294$ and shares the first $4$ digits with the base $8294$.  The number $82944$ in base $8294$ is $A4$ (where we borrow from hexadecimal notation and use $A$ to denote the digit $10$) and $2^{10}3^4 = 82944$.  Are there other numbers with this property?

\begin{theorem}
If $1024\cdot 3^c - c$ is divisible by $10$ for some integer $c\geq 0$, then $1024\cdot 3^c$ is a Meertens number in base $b= \frac{1024\cdot 3^c - c}{10}$.
\end{theorem}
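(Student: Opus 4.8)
The plan is to show that, under the divisibility hypothesis, the number $m = 1024\cdot 3^c = 2^{10}3^c$ has exactly two base-$b$ digits, namely $10$ and $c$, and then to verify the fixed-point condition $M_b(m)=m$ directly. The whole proof rests on recognizing the algebraic identity hidden in the definition of $b$, so I would begin by unwinding that definition.

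First I would rewrite the hypothesis. Since $10$ divides $1024\cdot 3^c - c$, the quantity $b = \frac{1024\cdot 3^c - c}{10}$ is a positive integer, and clearing the denominator gives $10b = 1024\cdot 3^c - c$, i.e. $m = 1024\cdot 3^c = 10b + c$. This already exhibits $m$ in the form $d_1 b + d_2$ with the candidate digits $d_1 = 10$ and $d_2 = c$, matching the two-prime factorization $2^{10}3^c$ that we want.

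The key step, and the one requiring actual care, is to confirm that $(10,c)$ genuinely is the base-$b$ representation of $m$ rather than an expression that would carry or need more digits. By Definition~2 this amounts to checking the digit bounds $0 < d_1 = 10 < b$ and $0 \le d_2 = c < b$, together with the leading-digit condition $d_1 > 0$ (which is immediate). For $b > 10$ I would use $c < 3^c$ (an easy induction) to get $10b = 1024\cdot 3^c - c > 1023\cdot 3^c \ge 1023$, hence $b > 10$. For $c < b$ I would show $11c < 1024\cdot 3^c$, which follows from $3^c \ge 1 + 2c$ since $1024\cdot 3^c \ge 1024 + 2048c > 11c$. Both are routine inequalities exploiting that $1024\cdot 3^c$ grows exponentially while the competing terms are linear in $c$.

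Finally, with the base-$b$ representation confirmed to have $n=2$ digits $d_1 = 10$, $d_2 = c$ and primes $p_1 = 2$, $p_2 = 3$, I would compute $M_b(m) = p_1^{d_1}p_2^{d_2} = 2^{10}3^c = 1024\cdot 3^c = m$, so $m$ is a fixed point of $M_b$ and therefore a Meertens number in base $b$. I expect the only genuine obstacle to be the digit-bound verification in the previous paragraph; once $10 < b$ and $c < b$ are established, the fixed-point identity is immediate and the example $82944 = 2^{10}3^4$ in base $8294$ is recovered by taking $c = 4$.
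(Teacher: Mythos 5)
Your proposal is correct and follows essentially the same route as the paper's proof: rewrite $1024\cdot 3^c = 10b + c$, check that $b > 10$ and $b > c$ so that $(10, c)$ is the genuine two-digit base-$b$ representation, and conclude $M_b(m) = 2^{10}3^c = m$. The only difference is that you spell out the routine inequalities $b>10$ and $c<b$, which the paper simply asserts.
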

\begin{proof} First note that $b > 10$, $b > c$ and $1024\cdot 3^c = 10b+c$ written in base $b$ has digits $10$ and $c$ which maps to $1024\cdot 3^c$ under the map $M$. \eop
\end{proof}
There are two solutions with $c < 10$, i.e, $c=4$ and $c=6$, with $c=4$ corresponding to the number $82944$ above and $c=6$ corresponding to a Meertens number $746496$ in base $74649$.  Similarly,
$2^{100}3^{96}-96$ is a Meertens number in base $\frac{2^{100}3^{96}-96}{100}$ and the base in decimal is equal to the Meertens number in decimal minus the last $2$ digits.

Since $M_b$ is not injective, it is possible for a number to be a Meertens number in more than one number base.  We note in Table \ref{tbl:mn} that $6$, $10$, $216$ and $65536$ are Meertens numbers in more than one number base.  Are there any others?  The answer is yes as a consequence of the following result.

\begin{theorem}
If $a$, $k$ and $m$ are positive numbers such that $a+km = 2^a$ and $a < k$, then $2^{2^{a}}$ is a Meertens number in base $2^k$.
In particular for $a>2$, $2^{2^{a}}$ is a Meertens number in base $2^{2^a-a}$.
\end{theorem}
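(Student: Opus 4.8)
The plan is to write $N = 2^{2^{a}}$ explicitly in base $b = 2^{k}$ and then simply read off what $M_b$ does to it; the hypothesis $a + km = 2^{a}$ is precisely what makes the exponent split cleanly relative to the base. First I would use that relation to factor
\[
N = 2^{2^{a}} = 2^{km + a} = 2^{a}\cdot(2^{k})^{m} = 2^{a}\cdot b^{m},
\]
which exhibits $N$ as a leading digit $2^{a}$ followed by $m$ zeros. In other words, the base-$b$ digit string has length $n = m+1$ with $d_1 = 2^{a}$ and $d_2 = \cdots = d_{m+1} = 0$.

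The one step that actually uses a hypothesis beyond the defining equation is checking that $2^{a}$ is a legitimate base-$b$ digit, i.e. $0 < 2^{a} < b$. The lower bound holds because $a$ is positive, and the upper bound $2^{a} < 2^{k}$ is exactly the assumption $a < k$. This guarantees there is no carry and that the expansion above is the genuine base-$b$ representation with a nonzero leading digit, as the definition of $M_b$ requires.

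Applying the map is then immediate: since $p_1 = 2$ and every trailing digit is $0$,
\[
M_b(N) = \prod_{i=1}^{m+1} p_i^{d_i} = p_1^{2^{a}}\prod_{i=2}^{m+1} p_i^{0} = 2^{2^{a}} = N,
\]
so $N$ is a fixed point of $M_b$ and hence a Meertens number in base $2^{k}$.

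For the ``in particular'' assertion I would specialize $m = 1$ and $k = 2^{a} - a$, so that $a + km = a + (2^{a} - a) = 2^{a}$ holds automatically and the side condition $a < k$ reduces to $2a < 2^{a}$, which is valid exactly for $a > 2$ (at $a = 1, 2$ one instead gets $a = k$). I do not expect any serious obstacle here, as the whole argument is a direct computation; the only point deserving genuine care is the inequality $a < k$, which is what prevents the leading digit $2^{a}$ from overflowing the base and spilling into an extra digit that would change the image under $M_b$.
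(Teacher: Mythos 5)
Your proof is correct and follows essentially the same route as the paper: the same factorization $2^{2^a}=2^a\cdot(2^k)^m$, the same digit check $2^a<2^k$ from $a<k$, and the same specialization $m=1$, $k=2^a-a$ with $2^a-a>a$ for $a>2$. You simply spell out the details that the paper's one-line proof leaves implicit.
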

\begin{proof}
Since $2^{2^{a}} = 2^a2^{km}$, this means that $2^{2^{a}}$ consists of a single digit of value $2^a < 2^k$ followed by $m$ zeros.
Thus $M_{2^k}\left(2^{2^{a}}\right) = 2^{2^{a}}$.  For $a>2$, $2^a-a > a$ and by setting $m=1$, this shows that $2^{2^a}$ is a Meertens number in base $2^{{2^a}-a}$. \eop
\end{proof}

In particular, we have the following Corollary:

\begin{corollary}
If $k>a$ is a divisor of $2^a-a$, then $2^{2^{a}}$ is a Meertens number in base $2^k$.
\end{corollary}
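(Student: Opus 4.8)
The plan is to reduce the corollary directly to the second theorem by exhibiting a suitable factorization. The theorem says that if $a$, $k$, $m$ are positive integers with $a+km=2^a$ and $a<k$, then $2^{2^a}$ is a Meertens number in base $2^k$. So the whole task is to verify that the corollary's hypothesis---that $k>a$ divides $2^a-a$---lets me produce such an $m$.

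First I would set $m = \frac{2^a-a}{k}$, which is a positive integer precisely because $k$ is a divisor of $2^a-a$; note $2^a-a>0$ for all $a\geq 1$, so $m\geq 1$ holds. Next I would check the arithmetic identity required by the theorem: with this choice, $km = 2^a-a$, hence $a+km = a+(2^a-a) = 2^a$, exactly the relation the theorem needs. The condition $a<k$ is given directly in the corollary's hypothesis $k>a$. Having verified $a+km=2^a$ and $a<k$ with $a,k,m$ all positive, I would simply invoke the second theorem to conclude that $2^{2^a}$ is a Meertens number in base $2^k$.

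There is essentially no obstacle here; the corollary is a clean specialization of the theorem, and the only thing to watch is the degenerate possibility that $m$ fails to be a positive integer. The divisibility hypothesis guarantees $m$ is an integer, and positivity of $m$ follows from $2^a-a>0$, so both are immediate. I would make sure the write-up notes that $2^a-a$ is strictly positive (so that $m\geq 1$ rather than $m=0$), since a zero value of $m$ would correspond to $a=2^a$, which never occurs for positive $a$. Beyond that flag, the proof is a one-line substitution followed by citing the preceding theorem.
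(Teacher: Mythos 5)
Your proof is correct and is exactly the specialization the paper intends: the corollary is stated without proof as an immediate consequence of the preceding theorem, obtained by taking $m=(2^a-a)/k$. Your extra care in checking that $m$ is a positive integer is the only nontrivial point, and you handle it properly.
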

For small values of $a$ we list these divisors in Table \ref{tbl:m2}.

\begin{table}[htbp]
\begin{center}
\begin{tabular}{c|c}
\hline\hline
$a$ & $k$: divisors of $2^a-a$ larger than $a$ \\
\hline
3 & 5 \\
4 & 6, 12 \\
5 & 9, 27 \\
6 & 29, 58 \\
7 & 11, 121 \\
8 & 31, 62, 124, 248 \\
9 & 503 \\
10 & 13, 26, 39, 78, 169, 338, 507, 1014 \\
11 & 21, 97, 291, 679, 2037 \\
12 & 1021, 2042, 4084 \\
13 & 8179 \\
14 & 1637, 3274, 8185, 16370 \\
15 & 4679, 32753 
\end{tabular}
\end{center}
\caption{Values of $a$ and $k$ such that  $2^{2^{a}}$ is a Meertens number in base $2^k$.}\label{tbl:m2}
\end{table}

This shows that there are many numbers (for example $4294967296 = 2^{2^5}$) that are Meertens numbers in more than one base.  For instance $2^{2^{16}}$ is a Meertens number in at least $105$ different bases and $2^{2^{64}}$ is a Meertens number in at least $435$ bases!
In particular, for any integer $t > 2$, $2^{2^t-k}-2^{t-k}$ is a divisor of $2^{2^t}-2^t$ for $k=0,\cdots , t$.  Thus
$2^{2^{2^t}}$ is a Meertens number in at least $t+1$ different bases, i.e. there are numbers which are Meertens numbers for an arbitrarily large number of bases.
Even though there is only one known Meertens number in base 10, the above argument also implies that there are arbitrarily large bases for which Meertens numbers exist.

\begin{theorem}\label{thm:23}
For integers $m\geq n\geq 0$, 
\begin{itemize}
\item $2\cdot 3^n$ is a Meertens number in base $2\cdot 3^n -n$,
\item $2^{2^n}3^{2^m}$ is a Meertens number in base $2^{(2^n-n)}3^{2^m}-2^{m-n}$,
and 
\item $2^{3^n}3^{3^m}$ is a Meertens number in base $2^{3^n}3^{(3^m-n)}-3^{m-n}$.
\end{itemize}
\end{theorem}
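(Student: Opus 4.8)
The plan is to handle all three items by the same device already used in the preceding theorems: for each pair (number $N$, base $b$) I exhibit the base-$b$ representation of $N$ explicitly as a two-digit numeral $d_1d_2$, check that $d_1$ and $d_2$ are legitimate digits, and then note that applying $M_b$ sends these digits to the exponents of the first two primes $p_1=2$, $p_2=3$, giving $M_b(N)=2^{d_1}3^{d_2}$. Since such a two-digit number satisfies $N=d_1\cdot b+d_2$, the entire statement reduces to a single exact identity of the form $N=d_1\cdot b+d_2$ together with the digit inequalities $0<d_1<b$ and $0\le d_2<b$. In each case the digits are read directly off the intended prime factorization of $N$: the exponent of $2$ becomes the leading digit $d_1$ and the exponent of $3$ becomes the trailing digit $d_2$.

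Concretely, for the first item I take $b=2\cdot 3^n-n$ and write $2\cdot 3^n=1\cdot b+n$, so $d_1=1$, $d_2=n$, and $M_b(2\cdot 3^n)=2^{1}3^{n}$. For the second I set $b=2^{2^n-n}3^{2^m}-2^{m-n}$ and claim $d_1=2^n$, $d_2=2^m$; expanding gives $2^n\cdot b+2^m=2^{2^n}3^{2^m}-2^m+2^m=N$. For the third I set $b=2^{3^n}3^{3^m-n}-3^{m-n}$ and claim $d_1=3^n$, $d_2=3^m$; expanding gives $3^n\cdot b+3^m=2^{3^n}3^{3^m}-3^m+3^m=N$. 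The unifying feature, which is what makes the low-order digit come out exactly right, is that the constant $s$ subtracted in forming $b$ is engineered so that $d_1\cdot s=d_2$ (namely $s=n,\,2^{m-n},\,3^{m-n}$ against $d_1=1,\,2^n,\,3^n$), while the leading term of $b$ is exactly $N/d_1$. In all three cases $M_b(N)=2^{d_1}3^{d_2}=N$, which is the assertion.

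The only work requiring care is verifying the side conditions that make $d_1d_2$ a genuine two-digit base-$b$ numeral. First I must check that the exponents defining $b$ are nonnegative, i.e. $2^n-n\ge 0$, $3^m-n\ge 0$, and $m-n\ge 0$; these follow from $m\ge n\ge 0$ together with the elementary bounds $2^n\ge n$ and $3^m\ge m\ge n$. Second I must check $d_1,d_2<b$; since $m\ge n$ forces $d_1\le d_2$, it suffices to bound the larger digit, which reduces to routine exponential-versus-polynomial comparisons such as $3^{2^m}>2^m$ in the second item and $2\cdot 3^{3^m-n}>3^m$ in the third. I expect no genuine obstacle: the identities are exact by construction and the inequalities are extremely loose. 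The main thing to keep straight is the bookkeeping that matches the subtracted term against the carry from $d_1\cdot b$, so that the representation terminates in exactly the two digits $d_1,d_2$ and does not spill into a third digit or violate $d_2<b$; small cases (e.g. $n=m=0$ giving $N=6$, $b=5$, and $n=m=1$ giving $N=216$, $b=71$, both matching Table \ref{tbl:mn}) serve as useful sanity checks.
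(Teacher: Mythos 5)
Your proposal is correct and follows essentially the same route as the paper's proof: in each case you exhibit the same two-digit base-$b$ representation (digits equal to the exponents of $2$ and $3$), verify the exact identity $N=d_1b+d_2$, and check the digit bounds $d_1,d_2<b$, which is precisely what the paper does. The sanity checks against Table \ref{tbl:mn} and the explicit attention to the nonnegativity of the exponents in $b$ are fine additions but do not change the argument.
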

\begin{proof}
Since $2n < 2\cdot 3^n$, $2\cdot 3^n$ is written as $1n$ in base $b=2\cdot 3^n -n$, and $M_b(2\cdot 3^n) = 2\cdot 3^n$. Similarly, $2^{n+1}\leq 2^{m+1} <  2^{(2^n-n)}3^{2^m}$ and 
 the $2$ digits  in the base $2^{(2^n-n)}3^{2^m}-2^{m-n}$ representation of $2^{2^n}3^{2^m}$ are $2^n$ and $2^m$ which is mapped by $M_b$ into $2^{2^n}3^{2^m}$. 
Next, $3^{n+1}\leq 3^{m+1} < 2^{3^n}3^{(3^m-n)}$ and 
 the $2$ digits  in the base $2^{3^n}3^{(3^m-n)}-3^{m-n}$ representation of $2^{3^n}3^{3^m}$ are $3^n$ and $3^m$ which is mapped by $M_b$ into $2^{3^n}3^{3^m}$ .
\eop
\end{proof}

\section{Injective G\"{o}del-like encodings}
As mentioned earlier, the encoding defined by $M_b(m)$ is not a proper G\"{o}del encoding as it is not one-to-one.  Next we look at some 
injective G\"{o}del-like encodings.
\subsection{$\alpha$-Meertens number} \label{sec:alphaM}
\begin{definition}
Let $b\geq 2$ and $0\leq d_i < b$ with $d_1 >0$ be integers such that $m = \sum_{i=1}^n d_i b^{n-i}$, then $N_b(m) = \prod_{i=1}^{n} p_i^{d_i+1}$
\end{definition}
Note that by the unique factorization theorem of the integers, $N_b$ is one-to-one on the set of positive integers.
We will call numbers such that $N_b(m) = m$ an {\em $\alpha$-Meertens number (in base $b$)}.  
Since the encoding is one-to-one, there cannot be a number $n$ that is a fixed point of this encoding in more than one base.  This is easily seen as a number will have different digits in different bases.
Some examples of $\alpha$-Meertens numbers in various bases are listed in Table \ref{tbl:m2number}.

\begin{table}[htbp]
\begin{center}
\begin{tabular}{c|c}
\hline\hline
base & $\alpha$-Meertens number \\
\hline
12 & 12, 24 \\
16 & 48 \\
24 & 96 \\
35 & 36 \\
64 & 384 \\
106 & 108 \\
107 & 216 \\
115 & 576 \\
192 & 1536 \\
321 & 324 \\
329 & 2304 \\
431 & 1296 \\
968 & 972 \\
970 & 1944 \\
1943 & 7776 \\
2048 & 24576 \\
2911 & 2916 \\
8742 & 8748 \\
8745 & 17496 \\
11662 & 34992 \\
24576 & 393216 \\
26237 & 26244 \\
46655 & 279936 \\
78724 & 78732 \\
78728 & 157464 \\
157462 & 629856 \\
236187 & 236196 \\
314925 & 944784 \\
\end{tabular}
\end{center}
\caption{$\alpha$-Meertens numbers in various bases.}\label{tbl:m2number}
\end{table}

The following result shows that there are an infinite number of $\alpha$-Meertens numbers.
\begin{theorem}
For $t\geq 0$, $3\cdot 2^{2^t+1}$ is an $\alpha$-Meertens number in base $b=3\cdot 2^{2^t-t+1}$.
\end{theorem}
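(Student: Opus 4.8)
The plan is to show directly that the base-$b$ representation of $m = 3\cdot 2^{2^t+1}$ consists of exactly two digits, and that applying $N_b$ recovers $m$. First I would compute the quotient $m/b$. Since $b = 3\cdot 2^{2^t-t+1}$, the factors of $3$ cancel and I get $m/b = 2^{(2^t+1)-(2^t-t+1)} = 2^t$, with zero remainder. Hence in base $b$ the number $m$ is written with the two digits $d_1 = 2^t$ and $d_2 = 0$, i.e. $m = 2^t\cdot b + 0$.

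The only genuine point to verify is that this really is a legal two-digit representation, namely that the leading digit satisfies $0 < d_1 = 2^t < b$. Positivity is immediate, and for the upper bound I would check that $b/2^t = 3\cdot 2^{2^t-2t+1} > 1$, which reduces to $2^t - 2t + 1 \geq 0$; this holds for every $t\geq 0$ (with room to spare from the factor of $3$), so no digit equals or exceeds the base and $d_2=0<d_1$ forces the digit string to have length exactly two. I expect this inequality to be the main---and only---obstacle, and it is entirely routine.

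Finally I would apply the encoding to the digit string $(2^t, 0)$. By definition $N_b(m) = p_1^{d_1+1}p_2^{d_2+1} = 2^{2^t+1}\cdot 3^{0+1} = 3\cdot 2^{2^t+1} = m$, so $N_b(m)=m$ and $m$ is an $\alpha$-Meertens number in base $b$. It is worth noting where the constant $3$ comes from: because $N_b$ shifts every digit up by one, the trailing zero digit contributes the factor $p_2^1 = 3$, which is precisely the factor appearing in $m$. This mirrors the convenient two-digit constructions already used in Theorem \ref{thm:23}, and the appearance of the double exponent $2^t$ inside the base is exactly what is needed so that the leading digit $2^t$ lands in the exponent slot of the prime $2$.
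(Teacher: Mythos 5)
Your proof is correct and follows essentially the same route as the paper's: both identify the base-$b$ representation of $3\cdot 2^{2^t+1}$ as the two digits $(2^t,0)$, check that $2^t < b$, and apply $N_b$ to get $2^{2^t+1}3^{1}=m$. Your version just spells out the division and the digit-bound inequality in more detail.
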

\begin{proof}
First note that $2^t < 3\cdot 2^{2^t-t+1}$.  Then $3\cdot 2^{2^t+1}$ in base $3\cdot 2^{2^t-t+1}$ is the digit $2^t$ followed by the digit $0$ which maps to $3\cdot 2^{2^t+1}$ under the mapping $N_b$. \eop
\end{proof}

On the other hand, for a fixed $b$, there are only a finite number of $\alpha$-Meertens numbers in base $b$.

\begin{definition}
Let $p_i$ denote the $i$-th prime number, Let $p_n\#$ denote the primorial defined as $p_n\# = \prod_{i=1}^n p_i$.  Let $\vartheta(t)$ denote the first Chebyshev function defined as $\vartheta(n) = \sum_{p\leq n} \log(p)$ where $p$ ranges over all prime numbers less than or equal to $n$.
\end{definition}

\begin{theorem}\label{thm:primorial}
$p_n\# > n^{0.5972 n}$.  If $n\geq 947$, then $p_n\# > n^{0.980 n}$.
\end{theorem}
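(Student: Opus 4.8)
The plan is to bound the primorial $p_n\#$ from below using the relationship between the primorial and the first Chebyshev function, namely $\log(p_n\#) = \vartheta(p_n)$, and then to convert a lower bound on $\vartheta(p_n)$ into the desired lower bound on $p_n\#$ expressed in terms of $n^{cn}$. Equivalently, since the goal is to show $p_n\# > n^{cn}$ for the stated constants $c \in \{0.5972, 0.980\}$, it suffices to show $\log(p_n\#) = \vartheta(p_n) > c\, n \log n$. I would therefore reduce the theorem to establishing a lower bound of the form $\vartheta(p_n) > c\, n \log n$.

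First I would recall the standard explicit estimates for $\vartheta$ and for the $n$-th prime $p_n$ due to Rosser and Schoenfeld (and sharpenings by Dusart). Two ingredients are needed: a lower bound for $\vartheta(x)$ of the form $\vartheta(x) > x(1 - \varepsilon(x))$ valid for $x$ past some threshold, and a lower bound for $p_n$ of the form $p_n > n(\log n + \log\log n - 1)$, which holds for all sufficiently large $n$. Combining $\vartheta(p_n) \geq$ (something close to $p_n$) with $p_n \gtrsim n\log n$ gives $\vartheta(p_n) \gtrsim n \log n$, and the task becomes extracting the explicit constant. For the weaker universal bound with constant $0.5972$, I expect that a cruder but fully explicit inequality valid for all $n \geq 1$ can be used, after checking the finitely many small cases $n$ by direct computation. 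For the stronger bound with constant $0.980$ valid only for $n \geq 947$, I would use the sharper Dusart-type estimates, which become strong precisely in that asymptotic range.

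The two-case structure of the statement suggests the following concrete strategy. For the claim $p_n\# > n^{0.5972 n}$ holding for all $n$, I would isolate a threshold $n_0$ beyond which the analytic estimate $\vartheta(p_n) > 0.5972\, n\log n$ provably holds, and then verify the inequality by direct numerical computation for each $n < n_0$; the constant $0.5972$ is presumably chosen precisely so that the small-$n$ verification goes through (the binding case is likely a small value of $n$ where the primorial is not yet large relative to $n^{cn}$). For the second claim, once $n \geq 947$ the prime-counting estimates are accurate enough that the constant can be pushed up to $0.980$, and here I would carry out the asymptotic estimate directly without a separate finite check, since $947$ serves as the threshold.

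The main obstacle is pinning down the explicit constants with the correct thresholds: everything hinges on having sufficiently sharp explicit forms of $\vartheta(p_n)$ and $p_n$, and on verifying that the chosen constant survives at the worst small value of $n$. The analytic asymptotics are routine, so the real work is the bookkeeping that guarantees the stated constant holds uniformly down to $n = 1$ (respectively $n = 947$), which will rely on citing precise forms of the Rosser--Schoenfeld and Dusart inequalities and on a short finite computation for the small cases.
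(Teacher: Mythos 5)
Your proposal is correct and follows essentially the same route as the paper: write $p_n\#=e^{\vartheta(p_n)}$, combine an explicit linear lower bound $\vartheta(x)>cx$ (Rosser--Schoenfeld, with a finite computation covering the primes below the threshold $7481=p_{947}$) with Rosser's bound $p_n>n\log n$ to get $\vartheta(p_n)>c\,n\log n$. The only cosmetic differences are that the paper needs nothing as sharp as the Dusart-type estimates you mention, and it performs the small-case check on the ratio $\vartheta(p_n)/p_n$ rather than on $p_n\#$ itself.
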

\begin{proof}
For $n=1$, the statement is trivially true. For $n>1$, note that $p_n\# = e^{\vartheta (p_n)}$.
Rosser \cite{rosser:prime:1939} showed that for $n\geq 1$, $p_n > n\log n$.  In \cite[Theorem 10]{rosser-shoenfeld:chebychev:1962}, it was shown that for $n\geq 7481$, $\vartheta(n) >
0.980 n$.  For primes $2 < p_n < 7481$, a simple computation shows that $\vartheta(p_n) > 0.5972 p_n$.
This implies that $p_n\# > e^{0.5972 p_n} > e^{0.5972 n \log n} = n^{0.5972 n}$ for $n> 1$.   The second part follows from the fact that the $947^{th}$ prime is 7481.  \eop
\end{proof}

\begin{lemma} \label{lem:m2bound}
If $m$ is an $\alpha$-Meertens number in base $b$ with $k$ digits, then $b^k > 2p_k\#$.
\end{lemma}
\begin{proof}
Since $m$ expressed in base $b$ has $k$ digits, $m< b^k$.  On the other hand, $m = N_b(m) \geq 2p_k\#$.
\eop
\end{proof}

\begin{theorem} \label{thm:m2bound2}
If $m$ is an $\alpha$-Meertens number in base $b$, then $m < b^{b^{1.675}}$.
\end{theorem}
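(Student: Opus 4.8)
The plan is to let $k$ denote the number of digits of $m$ in base $b$ and to reduce the whole statement to a single upper bound on $k$ of the form $k < b^{1.675}$. Once that is in hand, the trivial observation that a $k$-digit number satisfies $m < b^k$ immediately gives $m < b^{b^{1.675}}$. The two ingredients needed are already available: Lemma \ref{lem:m2bound} furnishes a lower bound $b^k > 2p_k\#$ on the $k$-th power of the base, and Theorem \ref{thm:primorial} furnishes a lower bound $p_k\# > k^{0.5972k}$ on the primorial in terms of $k$.

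First I would chain these two inequalities. Discarding the harmless factor of $2$, Lemma \ref{lem:m2bound} and Theorem \ref{thm:primorial} combine to give $b^k > p_k\# > k^{0.5972k}$. Taking $k$-th roots of the resulting outer inequality $b^k > k^{0.5972k}$ (legitimate since all quantities are positive) collapses it to the clean relation $b > k^{0.5972}$, which crucially no longer has $k$ appearing in an exponent.

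Next I would solve this for $k$. Raising both sides to the power $1/0.5972$ yields $k < b^{1/0.5972}$. The one point deserving care is the numerical estimate $1/0.5972 = 1.6744\ldots < 1.675$, so that $k < b^{1.675}$; this is essentially the only computation in the proof, and it is where the slightly unusual exponent $1.675$ comes from. Finally, since $m$ has exactly $k$ digits in base $b$ we have $m < b^k$, and combining this with $k < b^{1.675}$ and $b \geq 2 > 1$ gives $m < b^k < b^{b^{1.675}}$, which is the claim.

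I do not expect a genuine obstacle here, since all of the analytic difficulty has been quarantined inside Theorem \ref{thm:primorial}; the argument is mostly bookkeeping with constants. The two things to watch are preserving the direction of the inequalities when taking $k$-th roots and when exponentiating, and remembering to invoke the universally valid constant $0.5972$ in Theorem \ref{thm:primorial} (which even covers the trivial $k=1$ case) rather than the sharper $0.980$ bound, which only applies for large $k$ and so could not be used for an arbitrary $\alpha$-Meertens number.
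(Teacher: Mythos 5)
Your proposal is correct and follows exactly the paper's own argument: chain Lemma \ref{lem:m2bound} with Theorem \ref{thm:primorial} to get $b^k > k^{0.5972k}$, deduce $k < b^{1.675}$, and conclude $m < b^k < b^{b^{1.675}}$. The only difference is that you spell out the step the paper leaves implicit (taking $k$-th roots and checking $1/0.5972 < 1.675$), which is a welcome clarification rather than a deviation.
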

\begin{proof}
Suppose that $m$ expressed as a base $b$ number has $k$ digits.  Then by Lemma \ref{lem:m2bound} and Theorem \ref{thm:primorial}, $b^k > 2p_k\# > k^{0.5972k}$, implying that $k < b^{1.675}$.  Thus
$m < b^k < b^{b^{1.675}}$.
\eop
\end{proof}

\begin{corollary} \label{cor:m2bound}
For a fixed $b$, let $k^*$ be the largest integer $k$ such that $b^k > 2p_k\#$.  Then $k^*\leq b^{1.675}$.  If $m$ is an $\alpha$-Meertens number in base $b$, then $m < b^{k^*}$.
\end{corollary}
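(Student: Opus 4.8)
The plan is to deduce both assertions directly from the machinery already assembled, treating the corollary as a repackaging of Theorem~\ref{thm:m2bound2} and Lemma~\ref{lem:m2bound} rather than proving anything genuinely new. Before using $k^*$, I would note that it is well defined: the set $S=\{k\geq 1 : b^k > 2p_k\#\}$ is bounded above, because by Theorem~\ref{thm:primorial} the primorial $p_k\#$ grows super-exponentially and so $2p_k\# > k^{0.5972k}$ eventually dwarfs the fixed geometric quantity $b^k$. Hence $k^*=\max S$ exists whenever $S$ is nonempty, and $k^*\geq 1$.

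For the bound $k^*\leq b^{1.675}$, I would run the argument of Theorem~\ref{thm:m2bound2}, but applied to $k^*$ itself. By the definition of $k^*$ together with Theorem~\ref{thm:primorial},
\[
b^{k^*} > 2p_{k^*}\# > (k^*)^{0.5972\,k^*}.
\]
Taking logarithms and dividing through by $k^*\geq 1$ gives $\log b > 0.5972\,\log k^*$, that is $k^* < b^{1/0.5972}$. Since $1/0.5972 = 1.6744\ldots < 1.675$, this yields $k^* < b^{1.675}$, and in particular $k^*\leq b^{1.675}$.

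For the bound on $m$, let $m$ be an $\alpha$-Meertens number in base $b$ and let $k$ be its number of base-$b$ digits. Lemma~\ref{lem:m2bound} gives $b^k > 2p_k\#$, so $k\in S$ and therefore $k\leq k^*=\max S$. Since a $k$-digit base-$b$ number satisfies $m < b^k$, and $b>1$ forces $b^k\leq b^{k^*}$, I conclude $m < b^{k^*}$.

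The main obstacle here is essentially bookkeeping rather than conceptual. One must confirm that $k^*$ is genuinely an upper bound on the digit count of every $\alpha$-Meertens number, which is immediate once $k^*$ is taken to be $\max S$, because each admissible digit count lands in $S$ by Lemma~\ref{lem:m2bound}. The only remaining checks are the well-definedness of $k^*$ and the numerical inequality $1/0.5972 < 1.675$. There is no real difficulty, since the single piece of hard analytic input --- the super-exponential growth of the primorial --- has already been isolated in Theorem~\ref{thm:primorial}.
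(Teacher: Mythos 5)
Your proposal is correct and follows essentially the same route as the paper, which simply cites Lemma~\ref{lem:m2bound} and Theorem~\ref{thm:m2bound2}; you have merely spelled out the details (that any admissible digit count lands in the set maximized by $k^*$, and that the argument of Theorem~\ref{thm:m2bound2} applied to $k^*$ itself gives $k^*<b^{1/0.5972}<b^{1.675}$). No gaps.
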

\begin{proof}
This is a consequence of Lemma \ref{lem:m2bound} and Theorem \ref{thm:m2bound2}. \eop
\end{proof}

\begin{corollary} \label{cor:m2bound2}
For $b \leq 10000$, if $m$ is an $\alpha$-Meertens number in base $b$, then $m < b^{b-1}$.  If in addition  $608\leq b$, then $m < b^{\frac{b}{2}}$.
\end{corollary}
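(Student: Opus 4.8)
The plan is to reduce the statement, via Corollary \ref{cor:m2bound}, to a bound on $k^*$, the largest integer $k$ with $b^k > 2 p_k\#$. Since Corollary \ref{cor:m2bound} already gives $m < b^{k^*}$, it suffices to prove that $k^* \le b-1$ for all $2 \le b \le 10000$ (which yields $m < b^{k^*} \le b^{b-1}$), and that $k^* \le \lfloor b/2\rfloor$ for all $608 \le b \le 10000$ (which yields $m < b^{b/2}$).

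To bound $k^*$ I would first record a monotonicity fact. Writing $f(k) = b^k/(2p_k\#)$, one has $f(k+1)/f(k) = b/p_{k+1}$, so $f$ is strictly decreasing as soon as $p_{k+1} > b$. Consequently, if $p_{K+1} > b$ (so that $f$ is strictly decreasing from index $K+1$ onward), then the single inequality $b^{K+1} \le 2 p_{K+1}\#$ already forces $f(k) \le 1$ for every $k \ge K+1$, hence $k^* \le K$. This turns each of the two claims into one primorial comparison per base. For the first, take $K = b-1$ and use $p_b > b$ (indeed $p_n > n$ for all $n$), reducing to $b^b \le 2 p_b\#$. For the second, take $K = \lfloor b/2\rfloor$ and use Rosser's bound $p_n > n\log n$ to guarantee $p_{\lfloor b/2\rfloor + 1} > (b/2)\log(b/2) > b$ whenever $b \ge 608$, reducing to $b^{\lfloor b/2\rfloor + 1} \le 2 p_{\lfloor b/2\rfloor + 1}\#$.

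It then remains to verify these two finite families of inequalities. Asymptotically they are comfortable: since $\log(2 p_b\#) - b\log b = \log 2 + \vartheta(p_b) - b\log b \approx b(\log\log b - 1) \to +\infty$, the gap $2p_b\#/b^b$ grows, and the analogous estimate for $k \approx b/2$ is what makes the threshold $b \ge 608$ appear. The main obstacle is that the explicit Chebyshev estimates available to us, namely the exponents $0.5972$ and (for $n \ge 947$) $0.980$ in Theorem \ref{thm:primorial}, fall just short of the exponent $1$ that $b^b \le 2 p_b\#$ demands; plugging $p_b\# > b^{0.980 b}$ into the target only asks for $2 \ge b^{0.02 b}$, which is false. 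I would therefore either invoke a sharper explicit form of the prime number theorem (for instance a Rosser--Schoenfeld bound of the shape $\vartheta(x) > x - c\,x/\log x$, whose constants comfortably close the remaining gap in the exponent), or---matching the finite range stated in the corollary---simply compute $p_b\#$ and compare it with $b^b$ (respectively the half-range comparison) for each base $2 \le b \le 10000$. Because the monotonicity step collapses the search to one comparison per base, this computation is light, and it is exactly this finiteness that the hypothesis $b \le 10000$ encodes.
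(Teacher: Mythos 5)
Your proposal is correct and follows essentially the same route as the paper, which likewise reduces the claim via Corollary \ref{cor:m2bound} to a computer-assisted verification that $k^*\leq b-1$ (resp.\ $k^*\leq b/2$) for each $b$ in the stated range. Your monotonicity observation that $f(k)=b^k/(2p_k\#)$ satisfies $f(k+1)/f(k)=b/p_{k+1}$ is a worthwhile refinement the paper omits: it certifies the computed value of $k^*$ with a single primorial comparison per base instead of relying on the cruder a priori search bound $k^*\leq b^{1.675}$, and you correctly diagnose why Theorem \ref{thm:primorial} alone cannot replace the computation.
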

\begin{proof}
This requires a computer-assisted proof by computing the value of $k^*$ in Corollary \ref{cor:m2bound} for various $b$. \eop
\end{proof}

This allows us to improve Theorem  \ref{thm:m2bound2}.

\begin{theorem} \label{thm:m2bound3}
If $m$ is an $\alpha$-Meertens number in base $b$, then $m < b^{b^{1.02041}}$.
\end{theorem}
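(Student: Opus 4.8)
The plan is to re-run the counting argument behind Theorem~\ref{thm:m2bound2}, but to feed it the sharper primorial estimate $p_n\# > n^{0.980 n}$ from Theorem~\ref{thm:primorial} in place of the weaker $n^{0.5972 n}$. The exponent in the conclusion of Theorem~\ref{thm:m2bound2} was precisely the reciprocal $1/0.5972 \approx 1.675$ of the constant used, and since $1/0.980 = 1.020408\ldots < 1.02041$, the identical manipulation with the better constant should deliver the claimed exponent. The catch is that the strong bound of Theorem~\ref{thm:primorial} holds only for $n \geq 947$, so the small-digit regime $k < 947$ (and the correspondingly small bases) will have to be treated separately by cruder means.

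Concretely, let $m$ be an $\alpha$-Meertens number in base $b$ whose base-$b$ representation has $k$ digits. By Lemma~\ref{lem:m2bound}, $b^k > 2p_k\#$. In the principal case $k \geq 947$, Theorem~\ref{thm:primorial} gives $2p_k\# > k^{0.980 k}$, hence $b^k > k^{0.980 k}$; taking logarithms yields $\log b > 0.980\,\log k$, so $\log k < (\log b)/0.980$ and therefore $k < b^{1/0.980} < b^{1.02041}$. Since $m < b^k$, this produces $m < b^{b^{1.02041}}$, as desired.

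It remains to dispose of the cases where the strong primorial bound does not apply. For $b \leq 10000$ I would invoke the computer-assisted Corollary~\ref{cor:m2bound2}, which already gives $m < b^{b-1}$; as $b-1 < b \leq b^{1.02041}$ for $b \geq 2$, this is stronger than what is needed. For $b > 10000$ with $k < 947$, I would simply bound $m < b^k < b^{947}$ and observe that $b^{1.02041} > b > 10000 > 947$, so $b^{947} < b^{b^{1.02041}}$. Combining the three cases yields the bound for every base.

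I expect the only genuinely delicate point to be the bookkeeping at the threshold $k = 947$: one must verify that every pair $(b,k)$ with $k < 947$ is covered either by the small-base corollary or by the trivial estimate $b^{947} < b^{b^{1.02041}}$ for large $b$, with no base falling through the cracks. The crossover, where $b^{1.02041}$ first exceeds $947$, occurs near $b \approx 826$, which lies comfortably inside the range $b \leq 10000$ governed by Corollary~\ref{cor:m2bound2}, so the two auxiliary cases overlap and leave no base unaccounted for. The analytic heart of the argument, namely the logarithmic inequality in the $k \geq 947$ case, is routine once the sharper constant is in hand; the real work is the case management around the threshold.
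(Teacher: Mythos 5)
Your proof is correct and follows essentially the same route as the paper: apply the $k^{0.980k}$ primorial bound when $k\geq 947$ to get $k<b^{1/0.980}<b^{1.02041}$, and handle $k<947$ by splitting on the base, using Corollary~\ref{cor:m2bound2} for small $b$ and the trivial observation $b^{1.02041}>947$ for large $b$. The only difference is the threshold for the case split ($b\leq 10000$ versus the paper's $b\geq 826$, the point where $b^{1.02041}$ first exceeds $947$), which is immaterial since both regimes overlap the range covered by Corollary~\ref{cor:m2bound2}.
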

\begin{proof}
Suppose $m$ has $k$ digits in base $b$.  Then $m < b^k$.  If $k \geq 947$, then the proof of Theorem  \ref{thm:m2bound2}  combined with the second part of Theorem \ref{thm:primorial} shows that $k <b^{1.02041}$.
Suppose $k < 947$. If $b \geq 826$, then $b^{1.02041} \geq 947$ and thus again $k < b^{1.02041}$.  For $b < 826$, Corollary \ref{cor:m2bound2} shows that
$m < b^{b-1} < b^{b^{1.02041}}$.
\eop
\end{proof}

\begin{theorem} \label{thm:m2bound4}
If $m$ is an $\alpha$-Meertens number in base $b$, then $m < b^{b^{1+\epsilon}}$ where $\epsilon \rightarrow 0$ as $b\rightarrow\infty$.
\end{theorem}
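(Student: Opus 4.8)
The plan is to refine the arguments of Theorems \ref{thm:m2bound2} and \ref{thm:m2bound3} by replacing the fixed Chebyshev constant with the sharp asymptotic constant supplied by the prime number theorem. In those proofs the exponents $1.675$ and $1.02041$ arose precisely from the constants $0.5972$ and $0.980$ in Theorem \ref{thm:primorial}; since $1/0.980 \approx 1.0204$, no fixed explicit bound of that form can drive the exponent below roughly $1.02$. To obtain an exponent tending to $1$ I would instead invoke $\vartheta(x)\sim x$.

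First I would fix $\epsilon > 0$ and choose a constant $c$ with $\frac{1}{1+\epsilon} < c < 1$, so that $1/c < 1+\epsilon$. By the prime number theorem, $\vartheta(x)\sim x$, so there is a threshold $x_0$ with $\vartheta(x) > cx$ for all $x\geq x_0$. Combining this with Rosser's bound $p_k > k\log k$ and the identity $p_k\# = e^{\vartheta(p_k)}$, I would obtain, for all sufficiently large $k$,
\[
p_k\# = e^{\vartheta(p_k)} > e^{c\,p_k} > e^{c k\log k} = k^{ck}.
\]
This is the analogue of the bound $p_n\# > n^{0.980n}$, but now with $c$ arbitrarily close to $1$.

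Next, suppose $m$ is an $\alpha$-Meertens number in base $b$ with $k$ digits. By Lemma \ref{lem:m2bound}, $b^k > 2p_k\# > k^{ck}$ once $k$ is large enough; taking logarithms and dividing by $k$ gives $\log b > c\log k$, hence $k < b^{1/c} < b^{1+\epsilon}$. The only remaining case is that of small $k$, below the threshold where the primorial estimate applies, but this is dispatched at once: for $b$ large enough that $b^{1+\epsilon}$ exceeds that fixed threshold, every such $k$ still satisfies $k < b^{1+\epsilon}$. In either case $m < b^k < b^{b^{1+\epsilon}}$, and since $\epsilon > 0$ was arbitrary, this exhibits an exponent $1+\epsilon(b)$ with $\epsilon(b)\to 0$ as $b\to\infty$.

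The main obstacle is conceptual rather than computational: the explicit Rosser--Schoenfeld constant used earlier is fixed and cannot produce an exponent below about $1.02$, so the improvement genuinely requires the full strength of the prime number theorem (or any effective version with constant approaching $1$). The remaining bookkeeping --- checking that the threshold on $k$ and the requirement $b^{1+\epsilon} > x_0$ can be satisfied simultaneously for all large $b$ --- is routine.
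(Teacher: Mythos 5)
Your proposal is correct and follows essentially the same route as the paper: both replace the fixed Chebyshev constant with the asymptotic bound $\vartheta(x) > (1-\delta)x$, $\delta\rightarrow 0$ (your $c$ playing the role of $1-\delta$), and then rerun the argument of Theorems \ref{thm:m2bound2} and \ref{thm:m2bound3} to get $k < b^{1/c} < b^{1+\epsilon}$. Your write-up simply makes explicit the threshold bookkeeping that the paper leaves implicit.
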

\begin{proof}
It is well known that $\vartheta(x)$ behaves asympotically as $x$.  In particular, 
Ref. \cite[Theorem 4]{rosser-shoenfeld:chebychev:1962} shows that $\vartheta(x) > (1-\delta)x$ where $\delta\rightarrow 0$ as $x\rightarrow\infty$.  The rest of the proof is similar to the proof of Theorem \ref{thm:m2bound3} to show that $k < b^{\frac{1}{1-\delta}}$.\eop
\end{proof}

We conjecture that $k^*$ grows slower than the upper bound $b^{1.02041}$ or the asymptotic upper bound $b^{1+\epsilon}$ and that Corollary \ref{cor:m2bound2} is true for all $b$, i.e, all $\alpha$-Meertens numbers $m$ in base $b$ satisfies $m < b^{b-1}$ and satisfies
 $m < b^{\frac{b}{2}}$ for large enough $b$.  In particular, the first  few values of $k^*$ as a function of $b$ is shown in Table \ref{tbl:k-star} and a plot of $k^*$ versus $b$ is shown in Fig. \ref{fig:k-star} where $k^*$ appears to be less than $b$ for all $b$  and less than $\frac{b}{3}$ for large $b$.

\begin{table}[htbp]
\begin{center}
\begin{tabular}{|c||c|c|c|c|c|c|c|c|c|c|c|c|c|c|c|}
\hline
$b$ & $2$ & $3$ & $4$ & $5$  & $6$ & $7$ & $8$ & $9$  & $10$ & $11$ & $12$ & $13$  & $14$ & $15$ & $16$ \\
\hline
$k^*$ & $0$ & $0$ & $3$ & $4$  & $5$ & $6$ & $7$ & $8$  & $9$ & $10$ & $11$ & $12$  & $13$ & $14$ & $14$ \\ \hline
\end{tabular}
\end{center}
\caption{Values of $k^*$ as defined in Corollary \ref{cor:m2bound} for various $b$.}\label{tbl:k-star}
\end{table}

\begin{figure}[htbp]
\centerline{\includegraphics[width=7.2in]{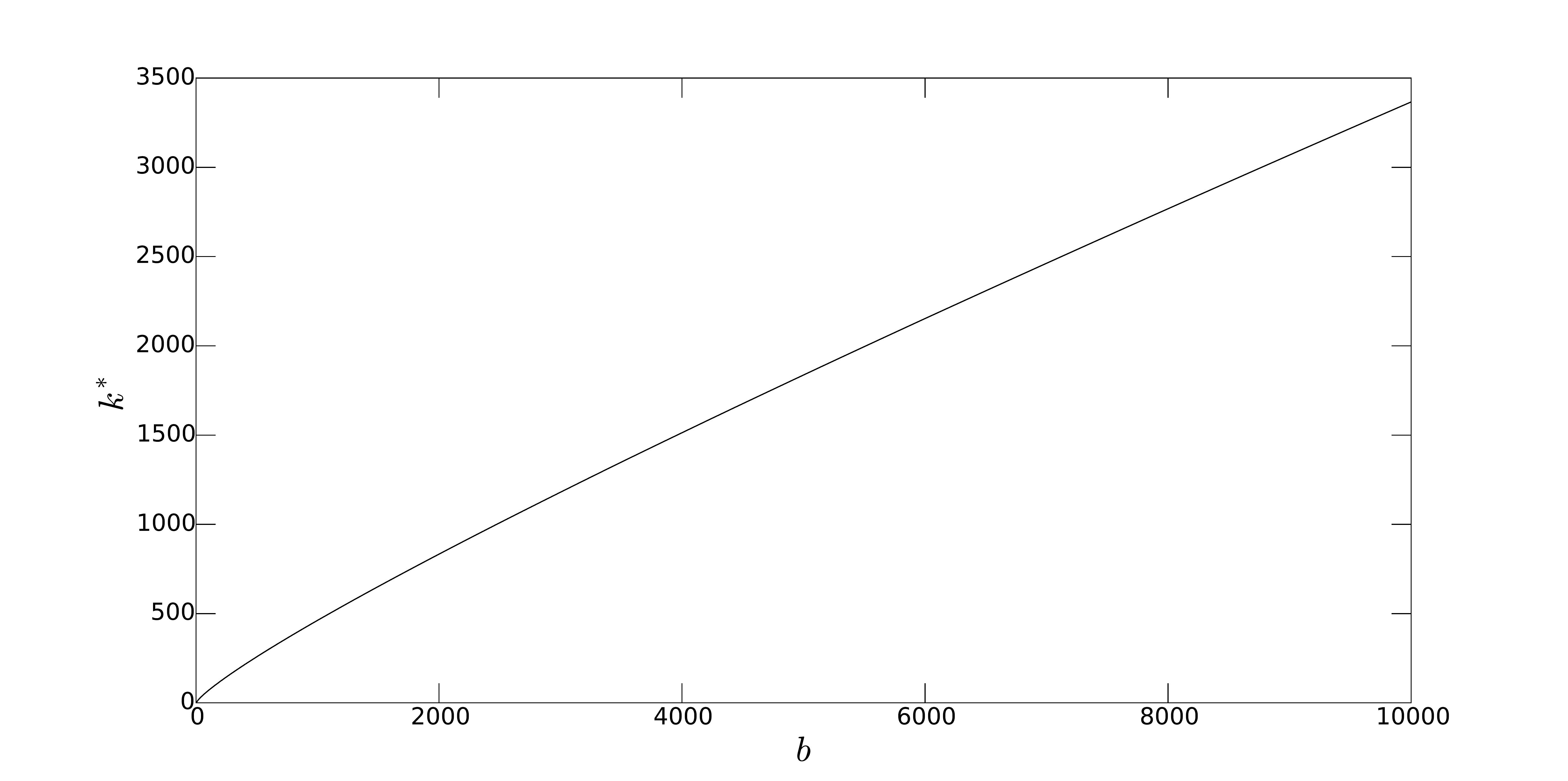}}
\caption{Plot of $k^*$ as defined in Corollary \ref{cor:m2bound} as a function of $b$.}
\label{fig:k-star}
\end{figure}

The following result shows that $12$ is the smallest base for which there exists an $\alpha$-Meertens number.
\begin{theorem}
There are no $\alpha$-Meertens numbers in base $b < 12$.
\end{theorem}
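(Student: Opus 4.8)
The plan is to bound the number of digits of a hypothetical $\alpha$-Meertens number and then reduce to a finite search. By Corollary \ref{cor:m2bound}, if $m$ is an $\alpha$-Meertens number in base $b$ then $m < b^{k^*}$, so $m$ has at most $k^*$ digits in base $b$. For $b=2$ and $b=3$, Table \ref{tbl:k-star} gives $k^*=0$, which forces $m < b^0 = 1$ and is impossible for a positive integer; hence no $\alpha$-Meertens numbers exist in these two bases. It then remains to treat $b \in \{4,5,\dots,11\}$, for which Table \ref{tbl:k-star} gives $k^* = b-1 \leq 10$, so any candidate has at most $b-1$ digits.

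For the remaining bases I would exploit the rigidity of the fixed-point condition to keep the search small. Writing $m = \prod_{i=1}^k p_i^{d_i+1}$, the exponents are read off directly from the base-$b$ digits: the $p_i$-adic valuation of $m$ equals $d_i+1$ for each $i \le k$, while $m$ has no prime factor exceeding $p_k$. Thus $m$ is $p_k$-smooth, is divisible by the primorial $p_k\#$, and satisfies $v_2(m) = d_1+1 \geq 2$. Combined with the $k$-digit requirement $b^{k-1} \le m < b^k$ and the lower bound $m \ge 2\,p_k\#$ already implied by Lemma \ref{lem:m2bound}, this confines the admissible exponent tuples $(d_1+1,\dots,d_k+1) \in \{2,\dots,b\}\times\{1,\dots,b\}^{k-1}$ to a small finite set, since increasing any exponent beyond the near-minimal choice pushes the product above $b^k$.

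I would then enumerate over $k = 1,\dots,k^*$ for each $b$. The case $k=1$ is immediate: a one-digit number $m = d_1$ would have to satisfy $m = N_b(m) = 2^{m+1}$, which is impossible because $2^{m+1} > m$ for every positive integer $m$. For $k \ge 2$, I would run through the admissible exponent tuples described above, form $m = \prod_{i=1}^k p_i^{d_i+1}$, compute its base-$b$ representation, and verify that it fails to reproduce the digit string $(d_1,\dots,d_k)$ with nonzero leading digit and exactly $k$ digits. Because each range $[b^{k-1}, b^k)$ contains only a handful of $p_k$-smooth multiples of $p_k\#$, this terminates quickly; the same (in principle computer-assisted) bookkeeping that produced Table \ref{tbl:m2number} confirms that no number in bases $4$ through $11$ is fixed by $N_b$.

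The main obstacle is organizational rather than conceptual: one must ensure the digit bound $k \le k^*$ is applied with the correct value for each $b$, and that no case of the finite analysis is overlooked — in particular that the exact-$k$-digit constraint is enforced, so that a product whose base-$b$ expansion has fewer or more than $k$ digits, or a leading digit mismatch, is correctly discarded. The fixed-point equation itself is transparent; the effort lies entirely in carrying out the bounded but slightly tedious enumeration and confirming that base $12$, where $12 = (10)_{12}$ and $24 = (20)_{12}$ first yield fixed points, is indeed the threshold.
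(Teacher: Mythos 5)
Your argument is correct and follows essentially the same route as the paper: bound the size of any candidate via Corollary \ref{cor:m2bound} (equivalently, Corollary \ref{cor:m2bound2}'s bound $m<b^{b-1}$) and then dispose of each base $b<12$ by a finite, computer-assisted exhaustive check. Your enumeration over exponent tuples $(d_1+1,\dots,d_k+1)$ is merely a more efficient organization of the same search the paper performs over all $m<b^{b-1}$, so there is no substantive difference.
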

\begin{proof}
This again requires a computer-assisted proof.  If $m$ is an $\alpha$-Meertens number in base $b$, then Corollary \ref{cor:m2bound2} implies that $m < b^{b-1}$.  Next an exhausive search up to $b^{b-1}$ for $b < 12$ shows that there are no
 $\alpha$-Meertens numbers in base $b< 12$. \eop
\end{proof}

\subsection{Reverse Meertens number}
 Another way to define a one-to-one encoding is by reversing the digits and applying $M_b$, i.e. if the base-$b$ representation of a number $m$ is $d_n,\cdots ,d_1$, then the encoding
$M^r_b(m) = \prod_{i=1}^{n} p_i^{d_i}$ is one-to-one\footnote{Note that in contrast to the definition of $M_b$, there is not a requirement here that $d_n > 0$, i.e. leading zeros in the base-$b$ representation of $m$ do not affect the value of $M_b^r(m)$.} and we define a {\em reverse Meertens number} in base $b$ as a number $m$ such that $M^r_b(m) = m$.  As before, because this encoding is one-to-one, a number can be a reverse Meertens number in at most one number base.
In base $10$, $12 = 3^12^2$ is a reverse Meertens number.
Reverse Meertens numbers in different bases are listed in Table \ref{tbl:revnumber}.
\begin{table}[htbp]
\begin{center}
\begin{tabular}{c|c}
\hline\hline
base & reverse Meertens number\\
\hline
3 & 3, 10, 273 \\
5 & 6, 175 \\
7 & 100 \\
9 & 27 \\
10 & 12  \\
17 & 36  \\
21 & 24  \\
25 & 3125 \\
44 & 48 \\
49 & 823543  \\
70 & 144 \\
71 & 216 \\
91 & 96 \\
97 & 486 \\
186 & 192 \\
194 & 972 \\
285 & 576 \\
323 & 1296 \\
377 & 384 \\
574 & 1728 \\
760 & 768 \\
1148 & 2304 \\
1527 & 1536 \\
2187 & 19683 \\
2499 & 17496 \\
3062 & 3072 \\
4603 & 9216 \\
4605 & 13824 \\
5182 & 20736 \\
6133 & 6144 \\
7775 & 46656 \\
9997 & 69984 \\
12276 & 12288 \\
12440 & 62208\\
18426 & 36864\\
24563 & 24576\\
36860 & 110592 \\
49138 & 49152 \\
73721 & 147456 \\
98289 & 98304 \\
209951 & 1679616 \\
1119743 & 10077696 
\end{tabular}
\end{center}
\caption{Reverse Meertens numbers in various bases.}\label{tbl:revnumber}
\end{table}

Note that $17496$ is both a reverse Meertens number and an $\alpha$-Meertens number (in different bases).  Clearly, Meertens number such as 6, 100, 36 and 1296 which are palindromes in their respective bases (5, 7, 17 and 323) are also reverse Meertens numbers. 

\begin{theorem}\label{thm:23reverse}
For integers $m\geq n\geq 0$, 
\begin{itemize}
\item $3\cdot 2^n$ is a reverse Meertens number in base $3\cdot 2^n -n$, 
\item $2^{2^m}3^{2^n}$ is a reverse Meertens number in base $2^{(2^m-n)}3^{2^n}-2^{m-n}$
and 
\item $2^{3^m}3^{3^n}$ is a reverse Meertens number in base $2^{3^m}3^{(3^n-n)}-3^{m-n}$.
\end{itemize}
\end{theorem}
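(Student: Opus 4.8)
The plan is to realize each of the three numbers as a two-digit integer in the relevant base and read the reverse encoding off directly. Recall that if $N$ has leading digit $A$ and units digit $B$ in base $b$ (so $N = Ab + B$ with $0\le A,B<b$ and $A\ge 1$), then reversing the digit string before applying $M_b$ sends the units digit to the exponent of $p_1=2$ and the next digit to the exponent of $p_2=3$; that is, $M^r_b(N) = 2^{B}3^{A}$. Hence to make $N$ a reverse Meertens number it suffices to choose $A$ and $B$ with $2^{B}3^{A}=N$, set $b=(N-B)/A$, and check that $A$ and $B$ are legitimate base-$b$ digits. This is the mirror image of the proof of Theorem \ref{thm:23} with the roles of the two primes interchanged, which is exactly why the numbers $2\cdot 3^n$, $2^{2^n}3^{2^m}$, $2^{3^n}3^{3^m}$ there become $3\cdot 2^n$, $2^{2^m}3^{2^n}$, $2^{3^m}3^{3^n}$ here.

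Carrying this out case by case: for $N=3\cdot 2^n$ I would take $A=1$, $B=n$, so that $2^{B}3^{A}=2^{n}\cdot 3=N$ and $b=N-n=3\cdot 2^n-n$. For $N=2^{2^m}3^{2^n}$ take $A=2^n$, $B=2^m$, so that $2^{B}3^{A}=2^{2^m}3^{2^n}=N$ and $b=(N-2^m)/2^n=2^{2^m-n}3^{2^n}-2^{m-n}$. For $N=2^{3^m}3^{3^n}$ take $A=3^n$, $B=3^m$, so that $2^{B}3^{A}=2^{3^m}3^{3^n}=N$ and $b=(N-3^m)/3^n=2^{3^m}3^{3^n-n}-3^{m-n}$. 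In each case the divisibility needed to make $b$ an integer is immediate from $m\ge n$, the displayed base agrees with the one in the statement, and by construction $M^r_b(N)=N$ once the two-digit representation is known to be genuine.

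The one nonroutine step, and the main obstacle, is verifying that the chosen $A$ and $B$ really are digits, i.e. that the larger of them is strictly below $b$. Since $m\ge n$, the binding constraint is $n<b$, $2^m<b$, and $3^m<b$ respectively, which unwind to $2n<3\cdot 2^n$, to $2^m+2^{m-n}<2^{2^m-n}3^{2^n}$, and to $3^m+3^{m-n}<2^{3^m}3^{3^n-n}$. The first is immediate. For the second I would bound $2^m+2^{m-n}\le 2^{m+1}$ and reduce $2^{m+1}<2^{2^m-n}3^{2^n}$ to $2^{m+n+1-2^m}<3^{2^n}$; the elementary bound $2^m\ge m+1$ pushes the exponent down to at most $n$, and $2^n<3^{2^n}$ finishes it. The third is handled analogously, bounding $3^m+3^{m-n}\le 3^{m+1}$ and reducing to $3^{m+n+1-3^n}<2^{3^m}$, where $n+1\le 3^n$ makes the exponent at most $m$ and $3^m<2^{3^m}$ closes the estimate. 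These inequalities are the counterparts of $2^{m+1}<2^{2^n-n}3^{2^m}$ and $3^{m+1}<2^{3^n}3^{3^m-n}$ used in Theorem \ref{thm:23}. With the digit bounds established, the two-digit representations are valid and all three claims follow.
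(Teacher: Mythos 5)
Your proof is correct and follows essentially the same route as the paper's: realize each number as a two-digit numeral $Ab+B$ in the stated base, check that $A$ and $B$ are valid digits, and observe that the reversed encoding gives $2^B3^A = N$. You actually supply slightly more detail than the paper on the digit-bound inequalities (the paper simply asserts $2^{m+1} < 2^{(2^m-n)}3^{2^n}$ and $3^{m+1} < 2^{3^m}3^{(3^n-n)}$), but the argument is the same.
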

\begin{proof}
Since $2n < 3\cdot 2^n$, $3\cdot 2^n$ is written as $1n$ in base $b=3\cdot 2^n -n$, and $M^r_b(3\cdot 2^n) = 3\cdot 2^n$. Similarly, $2^{n+1}\leq 2^{m+1} < 2^{(2^m-n)}3^{2^n}$ and 
 the $2$ digits  in the base $2^{(2^m-n)}3^{2^n}-2^{m-n}$ representation of $2^{2^m}3^{2^n}$ are $2^n$ and $2^m$ which is mapped by $M_b^r$ into $2^{2^m}3^{2^n}$ .
Next, $3^{n+1}\leq 3^{m+1} < 3^{(3^n-n)}2^{3^m}$ and 
 the $2$ digits  in the base $2^{3^m}3^{(3^n-n)}-3^{m-n}$ representation of $2^{3^m}3^{3^n}$ are $3^n$ and $3^m$ which is mapped by $M_b^r$ into $2^{3^m}3^{3^n}$ .
\eop
\end{proof}

\begin{theorem} \label{thm:rmn}
$p_{r+1}^{p_{r+1}}$ is a reverse Meertens number in base $b = p_{r+1}^{\frac{p_{r+1}-1}{r}}$ if $r$ divides $p_{r+1}-1$.
\end{theorem}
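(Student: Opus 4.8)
The plan is to write down the base-$b$ digits of $m = p_{r+1}^{\,p_{r+1}}$ explicitly and then simply read off $M^r_b(m)$. Throughout I would abbreviate $q = p_{r+1}$ and $s = \tfrac{q-1}{r}$; the divisibility hypothesis $r \mid q-1$ is exactly what makes $s$ a (positive) integer, and the base in question is $b = q^{s}$. The entire statement reduces to one algebraic identity plus one inequality certifying that the digits produced are admissible.

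The core step is the factorization that exposes the base-$b$ expansion. Using $q - 1 = rs$,
\[ m = q^{q} = q\cdot q^{\,q-1} = q\cdot q^{rs} = q\cdot\bigl(q^{s}\bigr)^{r} = q\cdot b^{r}. \]
Once we know that $q < b$, this identity says that $m$ is written in base $b$ as the single digit $q$ in the $b^{r}$ place followed by $r$ zeros, every other digit being $0$. In the reverse encoding the coefficient of $b^{\,i-1}$ is matched with the $i$-th prime $p_i$, so the lone nonzero digit occupies position $i = r+1$. Consequently $M^r_b(m) = p_{r+1}^{\,q} = q^{q} = m$, which is precisely the claim that $m$ is a reverse Meertens number in base $b$. (The most significant digit here is $q \neq 0$, so there is no ambiguity from leading zeros.)

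The only point demanding real care --- and the step I expect to be the main obstacle --- is justifying $q < b$, equivalently $s \ge 2$, so that $q$ is a legitimate base-$b$ digit. Here I would invoke the elementary bound $p_{r+1} \ge r+2$ valid for every $r \ge 1$ (the $(r+1)$-th prime strictly exceeds $r+1$), whence $q - 1 \ge r+1 > r$. Since $q-1$ is by hypothesis a positive multiple of $r$ and the least multiple of $r$ exceeding $r$ is $2r$, we obtain $q - 1 \ge 2r$ and therefore $s = \tfrac{q-1}{r} \ge 2$, giving $b = q^{s} \ge q^{2} > q$ as required. (The case $r = 0$ is vacuous, as $0 \mid q-1$ never holds for $q = 2$.) With this inequality established the digit description is valid and the proof closes; as the encoding $M^r_b$ is one-to-one, $m$ is a reverse Meertens number in no other base.
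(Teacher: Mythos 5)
Your proof is correct and follows essentially the same route as the paper's: both write $m = q\cdot b^{r}$ with $b = q^{s}$, read off the single nonzero digit $q$ in position $r+1$, and conclude $M^r_b(m) = p_{r+1}^{q} = m$. The only difference is that you explicitly verify $s \ge 2$ (hence $q < b$) via $p_{r+1}\ge r+2$ and the divisibility hypothesis, a point the paper's proof leaves implicit in the phrase ``for $k, i > 1$.''
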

\begin{proof}
Since $k< k^i$ for $k, i >1$, consider a base $k^i$ representation consisting of the digit $k$ followed by $r$ zeros, where $r = \frac{k-1}{i}$.
This represents the number $m = k(k^i)^r = k^{ir+1} = k^k$.  Under the mapping $M^r_b$, $M^r_b(m)$ = $p_{r+1}^k$.  Then the result follows if
$k = p_{r+1}$. \eop
\end{proof}

In particular, the first few primes $p_{r+1}$ satisfying the condition in Theorem \ref{thm:rmn} are:
$3$, $5$, $7$, $31$, $97$, $101$, $331$, $1009$, $1093$, $1117$, $1123$, $1129$, $3067$, $64621$, $480853$, etc.

\section{Zeroless Meertens numbers}
Next we study Meertens numbers in base $b$ without a zero digit when written in base $b$ representation.  We will call these numbers {\em zeroless Meertens numbers}.  Examples include 6, 18, 36, 96, 54, 216, 1296 with corresponding bases 5, 16, 17, 19, 51, 71, 323. 
Similarly, examples of zeroless reverse Meertens numbers are: 6, 12, 36, 24, 48, 144, 1296 with corresponding bases 5, 10, 17, 21, 44, 70, 323.
In fact, Theorems \ref{thm:23} and \ref{thm:23reverse}  show that there are an infinite number of bases with zeroless Meertens numbers or with zeroless reverse Meertens numbers.  On the other hand, for a fixed $b$, the number of zeroless Meertens numbers and zeroless reverse Meertens numbers is finite.

\begin{theorem} \label{thm:zeroless1}
If $b$ is squarefree, then a zeroless Meertens number  or a zeroless reverse Meertens number $m$ in base $b$ satisfies $m < b^{u-1}$, where $p_u$ is the largest prime dividing $b$.
\end{theorem}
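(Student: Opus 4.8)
The plan is to bound the number of base-$b$ digits of $m$ and then apply the trivial size estimate. Suppose $m$ has $n$ digits in base $b$, so $m < b^n$. Because $m$ is a zeroless Meertens (or reverse Meertens) number, we have $m = \prod_{i=1}^n p_i^{d_i}$ with every digit $d_i \geq 1$, so $m$ is divisible by each of $p_1,\ldots,p_n$. It therefore suffices to prove that $n \leq u-1$, where $p_u$ is the largest prime dividing $b$: this immediately yields $m < b^n \leq b^{u-1}$.

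To establish $n \leq u-1$ I would argue by contradiction and assume $n \geq u$. Let $e$ denote the units (least significant) base-$b$ digit of $m$, which is $d_n$ in the Meertens case and $d_1$ in the reverse case; since $m$ is zeroless, $1 \leq e \leq b-1$. Reducing the base-$b$ expansion of $m$ modulo any prime $p_j$ dividing $b$ annihilates every term carrying a positive power of $b$, leaving $m \equiv e \pmod{p_j}$. On the other hand, since $b$ is squarefree with largest prime factor $p_u$, every prime $p_j \mid b$ has index $j \leq u \leq n$, and hence $p_j \mid m$ by the divisibility noted above. Combining these two facts gives $p_j \mid e$ for every prime $p_j$ dividing $b$.

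Because $b$ is squarefree it is exactly the product of its distinct prime divisors, so the fact that each such prime divides $e$ forces $b \mid e$. This contradicts $1 \leq e < b$, and hence $n \leq u-1$, as required. The two cases are handled uniformly: the only difference is which end of the expansion carries the units digit, but in both cases that digit equals $e$ and the product $\prod_{i=1}^n p_i^{d_i}$ is divisible by all of $p_1,\ldots,p_n$, which is all that the argument uses.

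The main obstacle, and the one place where the squarefree hypothesis is genuinely needed, is the passage from ``$p_j \mid e$ for each prime $p_j \mid b$'' to ``$b \mid e$''. This step works precisely because $b$ has no repeated prime factor; for a general $b$ one would recover only that $e$ is divisible by the radical of $b$, which is too weak to contradict $e < b$. The remaining ingredients — the units-digit congruence $m \equiv e \pmod{p_j}$ and the divisibility $p_j \mid m$ whenever $j \leq n$ — are routine once the zeroless condition has been invoked to guarantee $d_i \geq 1$ for all $i$.
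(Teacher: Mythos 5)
Your proof is correct and follows essentially the same route as the paper's: assume $m$ has at least $u$ digits, observe that the zeroless condition forces every prime factor of $b$ to divide $m$, use squarefreeness to conclude $b \mid m$, and derive a contradiction with the nonzero units digit. The only cosmetic difference is that you phrase the contradiction as $b \mid e$ for the units digit $e$, whereas the paper says $m$ would have a trailing zero in base $b$.
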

\begin{proof}
Suppose $m$ is a zeroless Meertens number in base $b$.  Let $S$ be the set of indices of primes which divide $b$, i.e. $b = \prod_{i\in S} p_i$.
If $m$ has $u$ or more digits, then $d_i > 0$ for each $i \in S$, i.e. $b$ divides $m = \sum_{i} p_i^{d_i}$, and $m$ has a trailing zero digit in base $b$ leading to a contradiction.  The case of a zeroless reverse Meertens number is similar.
\eop
\end{proof}

The analysis in Section \ref{sec:alphaM}  can also be used to bound the number of zeroless (reverse) Meertens numbers.   
\begin{lemma}For a fixed $b$, let $l^*$ be the largest integer $l$ such that $b^l > p_k\#$.  Then $k^*\leq l^* \leq b^{1.675}$.
\end{lemma}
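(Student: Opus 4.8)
The plan is to transcribe the $\alpha$-Meertens analysis of Section~\ref{sec:alphaM} verbatim, replacing the injective encoding $N_b$ by the (zeroless) map $M_b$; here I read the definition of $l^*$ as ``the largest integer $l$ with $b^l > p_l\#$,'' which is the natural zeroless counterpart of the quantity $k^*$ from Corollary~\ref{cor:m2bound}. First I would record why $l^*$ governs the digit count. If $m$ is a zeroless Meertens (or reverse Meertens) number in base $b$ with $k$ digits, then every digit satisfies $d_i \geq 1$, so
\[
m = M_b(m) = \prod_{i=1}^{k} p_i^{d_i} \geq \prod_{i=1}^{k} p_i = p_k\#,
\]
while $m < b^k$ because $m$ has $k$ digits; hence $b^k > p_k\#$ and $k \leq l^*$. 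The reverse case is identical, since ``zeroless'' forces all digits positive regardless of their order. This is exactly Lemma~\ref{lem:m2bound}, the only change being that the factor $2$ guaranteed for $N_b$ (coming from the shifted exponents $d_i+1$) is unavailable for $M_b$, so the bound weakens from $2p_k\#$ to $p_k\#$.

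For the first inequality $k^* \leq l^*$, I would simply note that the condition defining $k^*$ is strictly stronger than the one defining $l^*$: if $b^{k^*} > 2p_{k^*}\#$ then a fortiori $b^{k^*} > p_{k^*}\#$, so $k^*$ lies in the set over which $l^*$ is the maximum, whence $k^* \leq l^*$.

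For the second inequality $l^* \leq b^{1.675}$, I would feed the defining inequality $b^{l^*} > p_{l^*}\#$ into the primorial bound of Theorem~\ref{thm:primorial}, $p_n\# > n^{0.5972 n}$, to obtain $b^{l^*} > (l^*)^{0.5972\, l^*}$. Taking logarithms and dividing by $l^*$ (the degenerate case $l^* = 0$ being trivial) gives $\log b > 0.5972\,\log l^*$, i.e. $l^* < b^{1/0.5972}$. Since $1/0.5972 < 1.675$, this reads $l^* < b^{1.675}$, completing the bound; this is the same manipulation used in Theorem~\ref{thm:m2bound2} and Corollary~\ref{cor:m2bound}.

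I do not expect a genuine obstacle, as the statement is a direct transcription of the injective case. The only points needing care are (i) justifying that the factor $2$ may be dropped in passing from $N_b$ to the zeroless $M_b$, which follows from all digits being positive, and (ii) handling the strictness and the small-$l^*$ edge cases in the logarithmic step, in particular confirming the numerical inequality $1/0.5972 \leq 1.675$.
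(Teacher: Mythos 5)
Your proposal is correct and follows essentially the same route as the paper, which simply points to the proofs of Corollary~\ref{cor:m2bound} and Theorem~\ref{thm:m2bound2}: you correctly read the typo $p_k\#$ as $p_l\#$, drop the factor $2$ (justified since all digits of a zeroless number are positive, giving $m \geq p_k\#$), and run the same logarithmic manipulation via Theorem~\ref{thm:primorial} to get $l^* < b^{1/0.5972} < b^{1.675}$, with $k^* \leq l^*$ following from the weaker defining inequality.
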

\begin{proof}
The proof is similar to the proof of Corollary \ref{cor:m2bound}. \eop
\end{proof}
\begin{theorem} \label{thm:zeroless2}
If $m$ is a zeroless Meertens number or a zeroless reverse Meertens number in base $b$, then $m < b^{l^*}\leq b^{b^{1.675}}$.
\end{theorem}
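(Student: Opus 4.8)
The plan is to bound the number of base-$b$ digits $k$ of any zeroless Meertens (or reverse Meertens) number $m$, exactly as was done for $\alpha$-Meertens numbers in Section~\ref{sec:alphaM}. First I would observe that if $m$ has $k$ digits in base $b$, then trivially $m < b^k$. The point of the zeroless hypothesis is that every digit $d_i$ is at least $1$, so the image under $M_b$ (or $M_b^r$) satisfies $m = \prod_{i=1}^k p_i^{d_i} \geq \prod_{i=1}^k p_i = p_k\#$, since each exponent is positive. This gives the two-sided bound $p_k\# \leq m < b^k$, which is precisely the inequality $b^k > p_k\#$ appearing in the definition of $l^*$ in the preceding Lemma.

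Next I would invoke the definition of $l^*$ directly: since $k$ is an integer with $b^k > p_k\#$, and $l^*$ is by definition the largest such integer, we get $k \leq l^*$. Combining this with $m < b^k$ yields $m < b^{l^*}$. The second inequality $b^{l^*} \leq b^{b^{1.675}}$ follows immediately from the bound $l^* \leq b^{1.675}$ already established in the Lemma (which in turn rests on Theorem~\ref{thm:primorial}); since $b \geq 2 > 1$, the function $x \mapsto b^x$ is increasing, so $l^* \leq b^{1.675}$ gives $b^{l^*} \leq b^{b^{1.675}}$. Chaining the two steps produces $m < b^{l^*} \leq b^{b^{1.675}}$ as claimed.

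For the reverse Meertens case the argument is structurally identical: reversing the digit order only permutes which prime carries which exponent in the product, so the estimate $m = M_b^r(m) = \prod_{i=1}^k p_i^{d_i} \geq p_k\#$ still holds under the zeroless assumption, and the same two inequalities close the proof. The only subtlety to flag is a possible off-by-one in the digit count versus the index of the last prime used, but because the zeroless condition forces all $k$ exponents to be positive simultaneously, the primorial lower bound uses exactly $p_k\#$ with the same $k$ that bounds $m$ from above, so the two sides match cleanly.

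I expect no serious obstacle here; the entire content is repackaging the bounding technique of Section~\ref{sec:alphaM}, with $p_k\#$ replacing $2p_k\#$ because the zeroless condition only guarantees each exponent is at least $1$ (rather than the $+1$ shift built into $N_b$). The main thing to get right is simply citing the correct quantity $l^*$ from the immediately preceding Lemma rather than the $k^*$ from Corollary~\ref{cor:m2bound}.
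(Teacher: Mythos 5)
Your proof is correct and follows exactly the route the paper intends: the paper's own proof of Theorem~\ref{thm:zeroless2} is just the one-line remark that it is ``similar to the proof of Theorem~\ref{thm:m2bound2},'' and your argument supplies precisely those details, correctly replacing $2p_k\#$ by $p_k\#$ via the zeroless hypothesis and invoking the definition of $l^*$ from the preceding lemma. Nothing further is needed.
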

\begin{proof}
The proof is similar to the proof of Theorem \ref{thm:m2bound2}. \eop
\end{proof} 

\begin{table}[htbp]
\begin{center}
\begin{tabular}{|c||c|c|c|c|c|c|c|c|c|c|c|c|c|c|c|}
\hline
$b$ & $2$ & $3$ & $4$ & $5$  & $6$ & $7$ & $8$ & $9$  & $10$ & $11$ & $12$ & $13$  & $14$ & $15$ & $16$ \\
\hline
$l^*$ & $0$ & $2$ & $4$ & $5$  & $6$ & $7$ & $8$ & $9$  & $10$ & $11$ & $12$ & $12$  & $13$ & $14$ & $15$ \\ \hline
\end{tabular}
\end{center}
\caption{Values of $l^*$  for various $b$.}\label{tbl:l-star}
\end{table}

Using Theorems \ref{thm:zeroless1} and \ref{thm:zeroless2} and an exhaustive computer search we show that:
\begin{theorem}
\begin{itemize}
\item The number $6$ (associated with base $5$) is the only zeroless Meertens number for bases $< 12$.  
\item The numbers $6$ (associated with base $5$) and $12$ (associated with base $10$) are the only zeroless reverse Meertens number for bases $<12$.
\item There are no zeroless Meertens numbers or zeroless reverse Meertens numbers in bases $13$, $14$, or $15$.
\item The number $36$ is the only zeroless Meertens number and zeroless reverse Meertens number in base $17$.
\end{itemize}
\end{theorem}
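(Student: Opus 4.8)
The plan is to reduce each of the four claims to a finite exhaustive search, using Theorems \ref{thm:zeroless1} and \ref{thm:zeroless2} to supply an explicit ceiling for every base in question, and handling squarefree and non-squarefree bases by different bounds. First I would classify the relevant bases: among $b<12$ the non-squarefree ones are $4=2^2$, $8=2^3$ and $9=3^2$, while $2,3,5,6,7,10,11$ are squarefree; and the bases $13$, $14$, $15$, $17$ are all squarefree (with $13$ and $17$ prime, $14=2\cdot 7$, $15=3\cdot 5$). This split matters because Theorem \ref{thm:zeroless1} gives the much sharper bound for squarefree bases.

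For the squarefree bases I would invoke Theorem \ref{thm:zeroless1}, which bounds any zeroless (reverse) Meertens number by $m < b^{u-1}$, where $p_u$ is the largest prime dividing $b$. Reading off $u$ in each case gives the concrete cutoffs $m < 13^{5}$ and $m < 17^{6}$ for the primes $13=p_6$ and $17=p_7$, and $m < 14^{3}$, $m < 15^{2}$ for the composite squarefree bases (whose largest prime factors are $7=p_4$ and $5=p_3$); the bases $b<12$ yield smaller cutoffs still, for instance $m < 5^{2}$ for $b=5$ and $m < 10^{2}$ for $b=10$, which already pin down the fixed points $6=(11)_5$ and $12=(12)_{10}$. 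For the three non-squarefree bases, where Theorem \ref{thm:zeroless1} does not apply, I would instead use Theorem \ref{thm:zeroless2} and read the value of $l^*$ off Table \ref{tbl:l-star}, obtaining the ceilings $m < 4^{4}$, $m < 8^{8}$ and $m < 9^{9}$.

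With every base thus equipped with a finite ceiling, the remaining step is a direct computer search: for each candidate $m$ below the corresponding bound I would write $m$ in base $b$, discard any representation containing a zero digit, and test the two fixed-point conditions $M_b(m)=m$ and $M^r_b(m)=m$ separately. Collecting the survivors then reproduces exactly the numbers named in the four bullet points ($6$ for base $5$; $6$ and $12$ for bases $5$ and $10$; none for bases $13$, $14$, $15$; and $36$ for base $17$). No step involves a genuine mathematical difficulty, since the largest search space, $9^{9}\approx 3.9\times 10^{8}$, is comfortably within computational reach; the one place where care is essential — and hence the main point to verify — is that the bounding parameters $u$ and $l^*$ are applied correctly, so that the search range is provably guaranteed to contain \emph{every} zeroless and zeroless reverse Meertens number in each base, leaving no candidate unexamined.
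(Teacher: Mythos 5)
Your proposal matches the paper's argument: the paper proves this theorem exactly by combining Theorem \ref{thm:zeroless1} (for squarefree bases) and Theorem \ref{thm:zeroless2} with the tabulated values of $l^*$ (for $4$, $8$, $9$) to obtain finite search ceilings, followed by an exhaustive computer search. Your explicit bounds ($m<13^5$, $m<17^6$, $m<14^3$, $m<15^2$, $m<9^9$, etc.) are correctly derived, so the proposal is correct and takes essentially the same route.
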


We can also estimate the number of zero digits in a Meertens or reverse Meertens number:
\begin{theorem}
If $m$ is a Meertens or a reverse Meertens number in base $b$ with $u$ digits, then the number of zero digits in $m$ is larger than
\begin{equation}
u - e^{W(1.675u\log(b))}
\end{equation}
where $W$ is the Lambert W function.
\end{theorem}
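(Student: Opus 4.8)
The plan is to translate the question about zero digits into a bound on the number of \emph{nonzero} digits, and then to invert the resulting transcendental inequality using the Lambert $W$ function. Write $u$ for the total number of base-$b$ digits of $m$, let $k$ denote the number of nonzero digits, and let $z = u-k$ be the number of zero digits, so that proving $z > u - e^{W(1.675 u\log b)}$ is the same as proving $k < e^{W(1.675 u\log b)}$. The first and key step is the lower bound $m \geq p_k\#$. Indeed, whether $m$ is a Meertens or a reverse Meertens number, we have $m = \prod_{i=1}^{u} p_i^{d_i}$ for the (possibly reversed) digit string $d_1,\dots,d_u$; the factors coming from zero digits equal $1$, while each of the $k$ nonzero digits contributes $p_i^{d_i} \geq p_i$. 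Since the product of the primes over any $k$-element index set is minimized by taking the $k$ smallest indices, we obtain $m \geq \prod_{i=1}^{k} p_i = p_k\#$.

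Next I would combine this with the trivial upper bound $m < b^u$ (as $m$ is a $u$-digit base-$b$ number) and with the primorial estimate $p_k\# > k^{0.5972 k}$ from Theorem~\ref{thm:primorial}. Chaining these gives $b^u > m \geq p_k\# > k^{0.5972 k}$, and taking logarithms yields $u \log b > 0.5972\, k \log k$, i.e.
\begin{equation}
k \log k < \frac{u\log b}{0.5972} < 1.675\, u\log b,
\end{equation}
using $1/0.5972 < 1.675$.

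Finally I would invert this inequality. Substituting $y = \log k$ turns the left-hand side into $y e^{y}$, so the inequality reads $y e^{y} < 1.675\, u\log b$. Because $m$ has at least one nonzero (leading) digit we have $k \geq 1$ and hence $y \geq 0$, which places us on the increasing branch of $t \mapsto t e^{t}$ whose inverse is the principal Lambert $W$ function. Monotonicity then gives $y < W(1.675\, u\log b)$, that is $k < e^{W(1.675 u\log b)}$, and substituting $z = u-k$ completes the proof.

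The argument is essentially routine once the lower bound $m \geq p_k\#$ is in hand, so the only point requiring care is the final inversion: one must check that $\log k$ lies in the range $[0,\infty)$ on which $t e^{t}$ is strictly increasing, so that the inequality survives passage through $W$ in the correct direction. I expect this monotonicity bookkeeping, rather than any substantive estimate, to be the main obstacle.
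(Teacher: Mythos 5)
Your proposal is correct and follows essentially the same route as the paper: bound $b^u > m \geq p_{k}\# > k^{0.5972k}$ where $k=u-z$ is the number of nonzero digits, take logarithms, and invert via the Lambert $W$ function. You simply supply two details the paper leaves implicit (why $m \geq p_k\#$ when the nonzero digits need not occupy the first $k$ positions, and the monotonicity check for the $W$-inversion), both of which are handled correctly.
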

\begin{proof}
Let $z$ be the number of zero digits in $m$.  Then
\begin{eqnarray*}
 b^u &>& m \geq p_{u-z}\# > (u-z)^{0.5972(u-z)}\\
u\log b &>& 0.5972(u-z)\log(u-z) \\
1.675 u\log b &>& (u-z)\log(u-z) \\
u-z &<& e^{W(1.675 u\log b)}
\end{eqnarray*}
\eop
\end{proof}

\section{Generalized Meertens numbers and generalized reverse Meertens numbers}
\begin{definition}
Given a pair of maps $f=\{f_1, f_2\}$ where $f_1:\mathbb{N}\rightarrow \mathbb{N}$, $f_2:\mathbb{N}\rightarrow \mathbb{N}$, 
define the map 
\[M^f(d_1,\cdots ,d_n) = \prod_{i=1}f_1(i)^{f_2(d_i)}.\]  A generalized Meertens number (GMN) in base $b$ is a number $m$ such that
$M^f(d_1,\cdots ,d_n) = m$ where $(d_1, \cdots, d_n)$ are the digits of $m$ in base $b$.  A generalized reverse Meertens number (GRMN) in base $b$ is a number 
$m$ such that $M^f(d_n,\cdots ,d_1) = m$.
\end{definition}

In the cases we discussed in the sections above, $f_1(i)$ is the $i$-th prime and $f_2(i) = i$ or $f_2(i) = i+1$.  For these cases, since $p^d > d$ for all primes $p$ and integers $d$, all GMN and GRMN in base $b$ must be larger or equal to $b$.  The tables above show that it it possible for a GMN or GRMN in base $b$ to be equal to $b$.  In particular, $2$ is a Meertens number in base $2$, $12$ is an $\alpha$-Meertens number in base $12$ and $3$ is a reverse Meertens number in base $3$.  
In fact, since
$b$ written in base $b$ is $10$, applying the digits $(1,0)$ (resp. the digits $(0,1)$) to $M^f$ will return a number $b$ which is a GMN (resp. GRMN) in base $b$.   This is summarized in the following result.

\begin{theorem}
Suppose $f_1(i) > i$ for all $i$.  
\begin{itemize}
\item If $c$ is a GMN or a GRMN in base $b$, then $c\geq b$.
\item If $f_1(1)^{f_2(1)}f_1(2)^{f_2(0)} > 1$, then $b$ is a GMN in base $b$ where $b=f_1(1)^{f_2(1)}f_1(2)^{f_2(0)}$.  
\item If $f_1(2)^{f_2(1)}f_1(1)^{f_2(0)} > 1$, then $b$ is a GRMN in base $b$ where $b=f_1(2)^{f_2(1)}f_1(1)^{f_2(0)}$.
\end{itemize}
\end{theorem}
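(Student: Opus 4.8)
The plan is to treat the lower bound and the two explicit constructions separately, tying everything to a single observation: in base $b$ a positive integer is at least $b$ if and only if it has at least two digits, because $b$ itself is written as $10$ in base $b$. So proving the inequality $c\geq b$ is the same as proving that no GMN or GRMN has exactly one digit, while the second and third bullets amount to exhibiting a concrete two-digit fixed point, namely $b$ itself.

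First I would handle the lower bound. Write a GMN or GRMN $c$ in base $b$ as $d_1\cdots d_n$ with $d_1\geq 1$. Since $c$ is then an $n$-digit base-$b$ number, $c\geq b^{n-1}$, so $c\geq b$ as soon as $n\geq 2$; it remains only to exclude $n=1$. A single-digit fixed point must satisfy $c=M^f(c)=f_1(1)^{f_2(c)}$, where the digit reversal in the GRMN case is vacuous. This is exactly where the hypothesis $f_1(i)>i$ is used: with $f_1(1)\geq 2$ the right-hand side is a genuine power of a base at least $2$, and, in the same spirit as the elementary estimate $p^d>d$ invoked for the classical map, it cannot equal the single digit $c$. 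That forces $n\geq 2$ and yields $c\geq b$. Next I would verify the two constructions, which are one-line computations. The representation of $b$ in base $b$ is $10$, i.e. the digit string $(d_1,d_2)=(1,0)$, and this is a legitimate two-digit representation precisely when $b\geq 2$; the hypothesis $f_1(1)^{f_2(1)}f_1(2)^{f_2(0)}>1$ is exactly what guarantees $b\geq 2$. Applying the map to $(1,0)$ gives $M^f(1,0)=f_1(1)^{f_2(1)}f_1(2)^{f_2(0)}=b$, so $b$ equals $M^f$ of its own digits and is a GMN. For the GRMN statement I would instead apply $M^f$ to the reversed string $(0,1)$, obtaining $M^f(0,1)=f_1(1)^{f_2(0)}f_1(2)^{f_2(1)}=b$ under the stated value $b=f_1(2)^{f_2(1)}f_1(1)^{f_2(0)}$, and the condition $f_1(2)^{f_2(1)}f_1(1)^{f_2(0)}>1$ again secures $b\geq 2$.

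The step I expect to be the main obstacle is the single-digit case of the lower bound, that is, showing the fixed-point equation $c=f_1(1)^{f_2(c)}$ has no solution with $1\leq c\leq b-1$. For the encodings actually of interest ($f_1$ the primes and $f_2(d)=d$ or $f_2(d)=d+1$) this is immediate, since $f_1(1)^{f_2(c)}\geq 2^{c}>c$, which is the generalization of $p^d>d$; this is the only place where one genuinely needs more than the bare inequality $f_1(i)>i$, and I would make explicit the mild growth condition on $f_2$ (for instance $f_2(d)\geq d$) that guarantees $f_1(1)^{f_2(c)}>c$. Once this single-digit exclusion is in place, the inequality $c\geq b$ follows from the digit count and the two constructions are immediate verifications.
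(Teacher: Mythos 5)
Your proposal is correct and follows essentially the same route as the paper, which in fact states this theorem with no proof environment at all: the intended argument is the paragraph immediately preceding it (single-digit fixed points are ruled out via $p^d>d$, and applying $M^f$ to the digit strings $(1,0)$ and $(0,1)$ gives the two constructions). Your verification of the second and third bullets is exactly the paper's. Your concern about the single-digit case of the first bullet is well founded and goes beyond the paper: the hypothesis $f_1(i)>i$ alone does \emph{not} exclude a one-digit fixed point, because nothing constrains $f_2$. For example, take $f_1(1)=2$ and $f_2(4)=2$; then $M^f(4)=2^2=4$, so $4$ is a GMN in base $10$ even though $4<10$, contradicting the first bullet as literally stated. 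The paper's informal justification (``since $p^d>d$ for all primes $p$ and integers $d$'') silently uses $f_2(d)=d$ or $f_2(d)=d+1$ from the earlier sections; your proposed additional hypothesis $f_2(d)\geq d$ (or, minimally, $f_1(1)^{f_2(d)}>d$ for $0<d<b$) is precisely what is needed to make the first bullet true at the stated level of generality, and flagging it explicitly improves on the paper.
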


Consider the case where $f_1$ and $f_2$ are both the identity map, i.e. $f_1(i) = f_2(i) = i$.  Clearly $1$ is a GMN and a GRMN in this case. 
In base $10$, $324 = 1^32^23^4$ is a GMN and $64 = 2^61^4$ is a GRMN. Table \ref{tbl:gmn} lists some GMN and GRMN numbers under these $f_i$'s.

\begin{table}[htbp]
\begin{center}
\begin{tabular}{c|c||c|c}
\hline\hline
base & generalized Meertens number & base & generalized reverse Meertens number\\
\hline
2 & 1350 & 2 & 2,6,12\\
4 & 108 & 3 & 120, 360\\
5 & 8 &  4 & 54\\
6 & 16 & 5 & 48\\
7 & 72 & 6 & 32\\
10 & 324 & 7 & 768\\
12 & 1458 & 8 & 216, 1728 \\
23 & 1728 & 10 & 64 \\
29 & 64 & 11 & 192, 729, 1536
\end{tabular}
\end{center}
\caption{Generalized Meertens numbers and generalized Meertens numbers in various bases for the case when $f_1$ and $f_2$ are identity maps. The number $1$ is omitted from this table.}\label{tbl:gmn}
\end{table}

\section{Conclusions}
We study Meertens numbers and their variations which are defined as fixed points of maps on the natural numbers.  Depending on the map, the set of such numbers can be sparse or abundant. We showed that for $\alpha$-Meertens numbers and zeroless (reverse) Meertens numbers these numbers are finite for a fixed base $b$.  It would be interesting to investigate whether this is true for the other variations as well and under what conditions.  Another open question is the asymptotic behavior of $k^*$ and $l^*$ as a function of $b$.

\bibliography{quant,markov,consensus,secure,synch,misc,stability,cml,algebraic_graph,graph_theory,control,optimization,adaptive,top_conjugacy,ckt_theory,math,number_theory,matrices,power,quantum}

\end{document}